\documentclass[11pt]{article}

\usepackage[T1]{fontenc}
\usepackage[utf8]{inputenc}
\usepackage{lmodern}
\usepackage{amsmath,amssymb,amsthm,mathtools}
\usepackage{geometry}
\usepackage{enumitem}
\usepackage{microtype}
\usepackage{hyperref}
\usepackage{cleveref}
\usepackage{aliascnt}
\usepackage{mathrsfs}

\hypersetup{colorlinks=true,linkcolor=blue,citecolor=blue,urlcolor=blue}

\newtheorem{theorem}{Theorem}[section]
\newaliascnt{proposition}{theorem}
\newtheorem{proposition}[proposition]{Proposition}
\aliascntresetthe{proposition}
\newaliascnt{lemma}{theorem}
\newtheorem{lemma}[lemma]{Lemma}
\aliascntresetthe{lemma}
\newaliascnt{corollary}{theorem}
\newtheorem{corollary}[corollary]{Corollary}
\aliascntresetthe{corollary}
\newaliascnt{remark}{theorem}
\newtheorem{remark}[remark]{Remark}
\aliascntresetthe{remark}
\theoremstyle{definition}
\newaliascnt{definition}{theorem}
\newtheorem{definition}[definition]{Definition}
\aliascntresetthe{definition}
\newaliascnt{question}{theorem}

\aliascntresetthe{question}

\crefname{theorem}{theorem}{theorems}
\Crefname{theorem}{Theorem}{Theorems}
\crefname{proposition}{proposition}{propositions}
\Crefname{proposition}{Proposition}{Propositions}
\crefname{lemma}{lemma}{lemmas}
\Crefname{lemma}{Lemma}{Lemmas}
\crefname{corollary}{corollary}{corollaries}
\Crefname{corollary}{Corollary}{Corollaries}
\crefname{remark}{remark}{remarks}
\Crefname{remark}{Remark}{Remarks}
\crefname{definition}{definition}{definitions}
\Crefname{definition}{Definition}{Definitions}
\crefname{question}{Question}{Questions}
\Crefname{question}{Question}{Questions}

\newcommand{\Met}{\mathscr{M}^{\infty}(M)}
\newcommand{\NoFoc}{\mathscr{F}(M)}
\newcommand{\NoConj}{\mathscr{C}(M)}

\newcommand{\Z}{\mathcal{Z}}
\newcommand{\eps}{\varepsilon}

\title{Isolated Flat Points and $C^2$-Robustness\\of the No Focal Points Property}
\author{Alexander Cantoral \and Sergio Roma\~na}
\date{}

\begin{document}
\maketitle
\begin{center}
    \emph{Dedicated to Nestor Nina Zárate, whose question inspired this work.}
\end{center}
\begin{abstract}
We prove a local stability criterion for the no focal points property on compact Riemannian surfaces. More precisely, if a smooth metric $g$ has non-positive Gaussian curvature and its zero-curvature set is finite, then $g$ belongs to the $C^2$-interior of the set of metrics without focal points. Thus, every sufficiently small $C^2$-perturbation of $g$ still has no focal points, although arbitrarily small perturbations may create regions of positive Gaussian curvature. By Ruggiero's characterization of the $C^2$-interior of the set of metrics without conjugate points, all metrics in the resulting neighborhood are Anosov. We also show that, starting from any hyperbolic metric on a compact surface, one can prescribe an arbitrary finite set as the zero set of the Gaussian curvature of a smooth conformal non-positively curved metric. These metrics can be approximated smoothly by negatively curved metrics, so they lie on the boundary of the negatively curved regime while remaining interior points of the no-focal-points regime. The proof of the stability theorem combines uniform local convexity, Gulliver's bound on the length of geodesic segments contained in small balls, and an inductive argument on the Riccati equation  that controls successive passages through the regions where positive curvature may appear.
\end{abstract}

\noindent\textbf{Keywords:} no focal points, Jacobi fields, Riccati equation, Anosov geodesic flow, conformal metrics, Gaussian curvature, $C^2$-stability.

\noindent\textbf{MSC 2020:} 53C20, 37D40, 53C22.

\section{Introduction}

Anosov metrics, namely Riemannian metrics whose geodesic flows are Anosov, form one of the basic classes connecting hyperbolic dynamics and Riemannian geometry. On a closed manifold, strictly negative sectional curvature implies that the geodesic flow is Anosov \cite{Anosov1967}. The converse is false: the class of Anosov metrics is strictly larger than the class of negatively curved metrics; see, for instance, \cite{Eberlein1973,Gulliver1975}. A classical theorem of Klingenberg shows that an Anosov geodesic flow has no conjugate points \cite{Klingenberg1974}. In the opposite direction, Ruggiero proved a remarkable stability characterization: on a compact manifold, a metric which is a $C^2$-interior point of the set of metrics without conjugate points has Anosov geodesic flow \cite{Ruggiero1991}. Together with the $C^2$-openness of the Anosov property, this identifies the $C^2$-interior  of the set of metrics without conjugate points with the set of Anosov metrics.

The no focal points condition lies naturally between curvature restrictions and the absence of conjugate points. If $M$ is a Riemannian manifold and $J$ is a non-trivial normal Jacobi field along a unit-speed geodesic $\gamma$ with $J(0)=0$, the no focal points condition requires
\[
    \frac{d}{dt}\|J(t)\|^2>0,\qquad t>0.
\]
In particular, if $K$ denotes the sectional curvature of $M$,
\[
    K\le 0
    \quad\Longrightarrow\quad
    \text{no focal points}
    \quad\Longrightarrow\quad
    \text{no conjugate points}.
\]
Neither converse holds in general. Gulliver constructed compact manifolds whose sectional curvature takes both signs and which nevertheless have no focal points; he also constructed Anosov metrics with focal points \cite{Gulliver1975}. Moreover, the final stability observation in \cite{Gulliver1975} points out that the strict inequalities used in his constructions persist under sufficiently small $C^2$-perturbations. Thus, his examples already produce $C^2$-open families of metrics without focal points that contain positive curvature.



The purpose of the present paper is therefore not merely to produce another mixed-curvature open set. Instead, we study a finer local phenomenon that is not visible from the blister construction: can a metric which is itself non-positively curved, and which lies at the boundary of the strictly negative curvature regime, nevertheless be an interior point of the set of metrics without focal points?

Our main theorem gives a positive answer whenever the flat locus consists of finitely many isolated points. Let $\Met$ denote the space of smooth Riemannian metrics on a fixed compact surface $M$, endowed with the $C^2$-topology, and let $\NoFoc\subset\Met$ denote the subset of metrics without focal points.

\begin{theorem}[Robustness at a finite flat locus]\label{thm:main}
Let $(M,g_*)$ be a compact Riemannian surface such that
\[
    K_{g_*}\le 0
\]
and assume that the zero-curvature set
\[
    \Z(g_*):=\{x\in M:K_{g_*}(x)=0\}
\]
is finite. Then $g_*$ belongs to the $C^2$-interior of $\NoFoc$. Equivalently, there exists a $C^2$-neighborhood $\mathcal U$ of $g_*$ in $\Met$ such that every $g\in\mathcal U$ has no focal points.
\end{theorem}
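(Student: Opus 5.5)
The plan is to work with the Riccati equation along unit-speed geodesics. On a surface, a normal Jacobi field $J$ with $J(0)=0$ can be written $J=f\,e$, where $e$ is a parallel unit normal field and $f$ solves $f''+K(\gamma(t))f=0$. Where $f\neq 0$, the quantity $u=f'/f$ satisfies
\[
u'+u^2+K=0,\qquad u(0^+)=+\infty .
\]
No focal points is equivalent to $u(t)>0$ for all $t>0$. By the comparison principle for Riccati equations it is enough to show that the \emph{stable} solution $u_T$, defined by $u_T(-T)=+\infty$, stays strictly positive for all $T$ and all $t>-T$. Equivalently, it is enough to control the solution started from $+\infty$ at an arbitrary initial time.

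\textbf{Step 1: uniform estimates near $\Z(g_*)$.} Write $\Z(g_*)=\{p_1,\dots,p_N\}$. Fix $r>0$ so small that the balls $B_i=B_{g_*}(p_i,2r)$ are disjoint and convex, with convexity radius bounded uniformly for $g$ that are $C^2$-close to $g_*$. Outside $\bigcup_i B_{g_*}(p_i,r)$ compactness gives $K_{g_*}\le -2\kappa<0$. Hence for $g$ that is $C^2$-close to $g_*$ we have $K_g\le-\kappa$ on this complement and $K_g\le\delta$ everywhere, where $\delta\to0$ as $g\to g_*$ in $C^2$; this uses that $K$ depends continuously on the $2$-jet of the metric. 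By uniform local convexity, each geodesic meets each $B_i$ in a single segment, so it cannot return without first leaving. By Gulliver's bound, every geodesic segment contained in $B_i$ has length at most some $L$, independent of $g$ near $g_*$ once $r$ is small.

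\textbf{Step 2: the inductive Riccati argument.} Decompose a geodesic into alternating pieces: "bad" pieces, which lie inside some $B_i$ and have length at most $L$, and "good" pieces, on which $K_g\le-\kappa$. On a good piece $u'=-u^2-K\ge \kappa-u^2$. So if $u$ enters with $u\ge0$, it moves towards $\sqrt\kappa$, and after a time $\tau$ it is at least $c(\tau)>0$. On a bad piece $u'\ge -u^2-\delta$. If $u$ enters with value $a>0$, comparison with the solution of $v'=-v^2-\delta$ gives $u\ge \sqrt\delta\tan(\arctan(a/\sqrt\delta)-\sqrt\delta\, t)$. This is positive on $[0,L]$ as soon as $\sqrt\delta L<\arctan(a/\sqrt\delta)$, which holds for $\delta$ small compared with $a$ and $L$.

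The induction hypothesis is: on entering each bad piece, $u\ge a_0$, where $a_0>0$ is a fixed constant. Two cases must be checked.
\begin{enumerate}[label=(\roman*)]
\item The initial segment: $u=+\infty$ at the start, and for small $\delta$ the Riccati solution stays above $1/t-\delta t$ on the first bad stretch.
\item Each good piece has length at least the gap between $\partial B_{g_*}(p_i,r)$-type regions, which is at least a fixed $\ell>0$. Choose $a_0<c(\ell)$. Then, provided the exit value from the previous bad piece is positive, the good piece restores $u\ge a_0$.
\end{enumerate}
The hypothesis is then propagated from one bad piece to the next.

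One subtlety needs care. The good pieces should be taken as the portions where the geodesic is outside $\bigcup B_{g_*}(p_i,r)$, while the bad pieces run from entering $B_i(2r)$ region to exiting it. On that portion of $B_i(2r)\setminus B_i(r)$ the curvature is still negative, which only helps. With this choice a transversal crossing of the annulus contains a good stretch of length at least $r$. That gives the required $\ell$ and removes any dependence on how many times the geodesic passes through the $B_i$.

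\textbf{Expected main obstacle.} The delicate point is making every constant ($r$, $L$, $\kappa$, $\ell$, $a_0$) uniform in $g$ before $\delta$ is chosen. In particular, Gulliver's length bound and the convexity of the balls must hold uniformly over a $C^2$-neighborhood of $g_*$. I would handle this by proving the bound for $g_*$, whose balls $B_i$ have $K\le0$ and are strictly convex, so geodesics in them have length at most about $2r(1+o(1))$. I would then use $C^2$-continuous dependence of geodesics, and of the second fundamental form of $\partial B_i$, on the metric. Once this uniformity is secured, $\delta$ is chosen last, so that the bad-piece estimate of Step 2 works with entry value $a_0$, and the neighborhood $\mathcal U$ is defined by $K_g<\delta$ together with these closeness requirements.
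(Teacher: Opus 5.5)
Your proposal is correct and follows essentially the same route as the paper: small balls around the flat points, uniform strict convexity of a larger concentric ball so that consecutive visits are separated by a definite stretch of uniformly negative curvature, a uniform Gulliver-type bound on each passage, and a Riccati induction (controlled loss inside the balls, recovery outside) with the positive-curvature bound $\delta$ chosen last. The differences are cosmetic: you use fixed $g_*$-balls and the recovery bound $u\ge\sqrt\kappa\tanh(\sqrt\kappa\, t)$ from any nonnegative entry value, whereas the paper uses $g$-balls and a linear growth estimate starting from the level $m/2$.
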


This theorem is local in the space of metrics and relies only on the geometry of $g_*$ near its finite flat locus. The perturbations in $\mathcal U$ are arbitrary; in particular, they are not assumed to remain non-positively curved. Since the curvature of $g_*$ vanishes at each point of $\Z(g_*)$, a small conformal perturbation can make the curvature positive near any chosen flat point. Consequently, every sufficiently small neighborhood in \cref{thm:main} contains metrics whose curvature changes sign; no restriction to non-positive-curvature perturbations is required.

\begin{corollary}[Mixed curvature in the robust neighborhood]\label{cor:mixed}
Under the assumptions of \cref{thm:main}, if $\Z(g_*)\neq\varnothing$, then every $C^2$-neighborhood of $g_*$ contains a smooth metric whose Gaussian curvature takes both positive and negative values. In particular, the neighborhood $\mathcal U$ in \cref{thm:main} may be chosen so that it contains metrics of mixed curvature, all of them without focal points.
\end{corollary}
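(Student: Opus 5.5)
Fix a point $p\in\Z(g_*)$ and work with conformal perturbations $g_\eps=e^{2u_\eps}g_*$, where $u_\eps$ is smooth and supported in a small $g_*$-ball $B(p,r)$. The plan has two parts. First, show that $g_\eps\to g_*$ in $C^2$ while $K_{g_\eps}(p)>0$. Second, show that $g_\eps$ still has negative curvature somewhere. The second part is easy. The set $\Z(g_*)$ is finite, so $K_{g_*}<0$ on $M\setminus\Z(g_*)$, which is nonempty. Choosing $r$ smaller than the distance from $p$ to some point $q\notin\Z(g_*)$, we get $g_\eps=g_*$ near $q$, and hence $K_{g_\eps}(q)=K_{g_*}(q)<0$.

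For the first part we use the conformal transformation law on surfaces,
\[
K_{e^{2u}g_*}=e^{-2u}\bigl(K_{g_*}-\Delta_{g_*}u\bigr),
\]
with the sign convention $\Delta=\operatorname{div}\nabla$. At $p$ we have $K_{g_*}(p)=0$, so it suffices to make $\Delta_{g_*}u_\eps(p)<0$. Take $u_\eps=-\eps\,\chi\,\rho^2$. Here $\rho$ is the $g_*$-distance to $p$, which is smooth on $B(p,r)$ for $r$ below the injectivity radius. The function $\chi$ is a fixed cutoff equal to $1$ near $p$ and supported in $B(p,r)$. Near $p$ we have $\Delta\rho^2=4+O(\rho^2)$ in dimension two, so $\Delta_{g_*}u_\eps(p)=-4\eps$ and $K_{g_\eps}(p)=4\eps>0$.

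It remains to check closeness. Since $u_\eps=\eps u_1$ with $u_1$ a fixed smooth function, we have $\|e^{2u_\eps}-1\|_{C^k}=O(\eps)$ for every $k$. Therefore $g_\eps\to g_*$ in the $C^\infty$ topology, and in particular in $C^2$. Given any $C^2$-neighborhood of $g_*$, taking $\eps$ small enough places $g_\eps$ inside it with mixed curvature.

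For the last sentence of the corollary, let $\mathcal U$ be the neighborhood from \cref{thm:main}. Every neighborhood of $g_*$ contained in $\mathcal U$ still works in that theorem. By the construction above, such a neighborhood contains some $g_\eps$, and $g_\eps$ has mixed curvature and no focal points.

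There is no real obstacle here. The only care needed is the sign convention for the Laplacian and choosing $r$ small enough that the support of $u_\eps$ both avoids a negatively curved point and lies inside a normal neighborhood of $p$.
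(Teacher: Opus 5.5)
Your proposal is correct and follows essentially the same route as the paper. Both use a conformal perturbation $e^{-2\eps\chi\rho^2}g_*$ localized at a flat point $p$, apply the conformal curvature formula to get $K(p)>0$, and pick a point $q$ outside the support to keep negative curvature. You are slightly more careful than the paper in requiring explicitly that $q\notin\Z(g_*)$, which the paper's choice of $q$ outside the support of $\psi$ tacitly needs.
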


The dynamical consequence is immediate but important. Since no focal points implies no conjugate points, every metric in the open set $\mathcal U$ is itself a $C^2$-interior point of the set of metrics without conjugate points. Ruggiero's theorem therefore yields the following corollary.

\begin{corollary}[Anosov consequence]\label{cor:anosov}
Every metric in the neighborhood $\mathcal U$ of \cref{thm:main} has Anosov geodesic flow.
\end{corollary}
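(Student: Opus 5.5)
The plan is to deduce the corollary from \cref{thm:main} together with Ruggiero's characterization recalled in the introduction. The argument is a chain of inclusions of open sets, so almost no new geometry is needed.

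Let $\NoConj\subset\Met$ denote the set of metrics without conjugate points, with the $C^2$-topology. First we record the inclusion $\NoFoc\subset\NoConj$. It is the standard implication recalled in the introduction: if $J$ is a non-trivial normal Jacobi field with $J(0)=0$, then $\frac{d}{dt}\|J(t)\|^2>0$ for $t>0$. Hence $\|J\|$ is strictly increasing on $(0,\infty)$. Since $J\not\equiv0$ and $J(0)=0$, we have $\|J(t)\|>0$ for small $t>0$, and monotonicity then forces $J(t)\neq 0$ for all $t>0$. The same holds for negative times by reversing the geodesic. So there are no conjugate points.

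Next, \cref{thm:main} provides a $C^2$-open set $\mathcal U\ni g_*$ with $\mathcal U\subset\NoFoc$. By the inclusion above, $\mathcal U\subset\NoConj$. An open set contained in $\NoConj$ lies in its $C^2$-interior, so every $g\in\mathcal U$ is a $C^2$-interior point of $\NoConj$.

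Finally, we apply Ruggiero's theorem \cite{Ruggiero1991}. On a compact manifold, every $C^2$-interior point of the set of metrics without conjugate points has Anosov geodesic flow. The surface $M$ is compact by hypothesis, so each $g\in\mathcal U$ is Anosov.

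The only point requiring care is matching the topology and regularity conventions. Ruggiero's result is stated for the $C^2$-topology on smooth (or sufficiently regular) metrics, and $\Met$ consists of smooth metrics with the $C^2$-topology. We must make sure that "interior in $\Met$" is the interior notion used in Ruggiero's statement. I expect this to be immediate, since both are interiors in the same space of metrics with the same topology. Otherwise there is no real obstacle; all the analytic difficulty lies in \cref{thm:main} itself.
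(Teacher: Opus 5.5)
Your proposal is correct and matches the paper's proof: openness of $\mathcal U$ and the chain $\mathcal U\subset\NoFoc\subset\NoConj$ make every $g\in\mathcal U$ a $C^2$-interior point of $\NoConj$, and Ruggiero's theorem then gives Anosov geodesic flow. Your explicit check that no focal points implies no conjugate points fills in a step the paper simply cites as standard.
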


Thus, \cref{thm:main} produces an open set lying simultaneously inside the no-focal-points and Anosov regions while crossing the boundary of the pointwise curvature condition $K<0$.

Our second main result shows that the geometric hypothesis of \cref{thm:main} occurs abundantly and can be realized with a prescribed finite flat set. The construction is conformal and starts from any hyperbolic metric.

\begin{theorem}[Prescribed isolated flat points]\label{thm:construction}
Let $(M,g_0)$ be a compact hyperbolic surface and let
\[
    Z=\{p_1,\dots,p_N\}\subset M
\]
be a non-empty finite set. Then there exists a smooth function $u\colon M\to\mathbb R$ such that the conformal metric
\[
    g_Z=e^{2u}g_0
\]
has non-positive Gaussian curvature and
\[
    K_{g_Z}(x)=0
    \quad\Longleftrightarrow\quad
    x\in Z.
\]
Moreover, $g_Z$ can be approximated in the $C^\infty$-topology by negatively curved conformal metrics.
\end{theorem}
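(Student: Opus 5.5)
We look for the conformal factor as $u=\varepsilon\varphi$ with $\varphi$ supported near $Z$. On a surface the curvature of $e^{2u}g_0$ is
\[
K_{g}=e^{-2u}\bigl(K_{g_0}-\Delta_{g_0}u\bigr)=-e^{-2u}\bigl(1+\Delta_{g_0}u\bigr),
\]
where $\Delta_{g_0}$ is the non-negative-sign-convention-free Laplace--Beltrami operator (trace of the Hessian). The problem therefore reduces to finding a smooth $u$ with
\[
\Delta_{g_0}u\ge -1 \text{ on } M,\qquad \Delta_{g_0}u(x)=-1 \iff x\in Z .
\]
The construction is local. Fix $r>0$ smaller than the injectivity radius and than half the minimal distance between points of $Z$. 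Around each $p_i$ we work in geodesic normal coordinates, or more conveniently in isothermal coordinates $z$ on a disc $D_i$ centred at $p_i$, where $g_0=e^{2\lambda}|dz|^2$ and $\Delta_{g_0}=e^{-2\lambda}\Delta_0$.

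\textbf{Local model.} Set $f:=1+\Delta_{g_0}u$. We need $f\ge0$ with zero set exactly $Z$, and $f=1$ away from small discs. So we first choose a target function: let $h\colon M\to[0,1]$ be smooth, with $h\equiv1$ outside $\bigcup_i D_i$, $h>0$ off $Z$, and $h(x)=|z|^2$ near $p_i$ (flattened appropriately). We then try to solve $\Delta_{g_0}u=h-1$. On a closed surface this requires $\int_M (h-1)\,dA_{g_0}=0$, and this fails, since $h-1\le0$ and $h-1\not\equiv0$. The fix is to put compensating positive mass elsewhere. Pick a point $q\notin Z$ away from the discs and let $h$ be larger than $1$ on a small disc around $q$, keeping $h>0$ there, so that $\int_M(h-1)\,dA=0$. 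By Fredholm theory for the Laplacian on the closed surface there is then a smooth solution $u$, unique up to constants, of $\Delta_{g_0}u=h-1$. For this $u$ we get $K_{g_Z}=-e^{-2u}h$, which is $\le0$ and vanishes exactly on $Z$. The main point to check is that $h$ can be chosen smooth with the prescribed zero set and the mean-zero condition; this is elementary, because the deficit $\int(1-h)$ can be made as small as we like by shrinking the discs, and a bump of bounded height near $q$ compensates it.

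\textbf{Approximation.} For the final claim, set $h_t:=h+t$ for $t\in(0,1]$ and modify the bump near $q$ slightly, say $h_t:=h+t-tc\,\beta$ with $\beta$ a fixed bump near $q$ and $c=\mathrm{Area}/\int\beta$, so that $\int(h_t-1)=0$. For $t$ small we still have $h_t>0$, because $h$ is bounded below near $q$. Solve $\Delta_{g_0}u_t=h_t-1$, normalized by $\int u_t=0$. Then $u_t-u$ solves $\Delta(u_t-u)=t(1-c\beta)$, and elliptic regularity gives $\|u_t-u\|_{C^k}\le C_k t$ for every $k$. Hence $e^{2u_t}g_0\to g_Z$ in $C^\infty$, and $K_{e^{2u_t}g_0}=-e^{-2u_t}h_t<0$ everywhere.

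\textbf{Main obstacle.} We expect the only delicate step to be the compatibility (mean-zero) condition together with the positivity of $h$ near the compensating bump. Shrinking the discs around $Z$ handles both.
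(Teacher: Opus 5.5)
Your proof is correct and is essentially the paper's argument. Both solve $\Delta_{g_0}u=F$ on the closed surface with $1+F$ a positive multiple of a smooth $h\ge0$ satisfying $h^{-1}(0)=Z$, which gives $K_{g_Z}=-(\text{positive})\,e^{-2u}h$. Both then approximate by adding a small positive constant to $h$.

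The only real difference is how you meet the compatibility condition. The paper divides by the mean: it takes $F=h/\alpha-1$ with $\alpha=\frac{1}{\operatorname{Area}_{g_0}(M)}\int_M h\,dV_{g_0}$, and in the approximation step uses $(h+s)/(\alpha+s)-1$. This makes your compensating bump at $q$, and the positivity check for $h_t$ near $q$, unnecessary.

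One small point: for $\|u_t-u\|_{C^k}\le C_k t$ you must also fix the normalization $\int_M u=0$ for $u$ itself. Otherwise $u_t\to u-\bar u$ rather than $u$.
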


Combining \cref{thm:main,thm:construction}, we conclude that every compact hyperbolic surface admits non-positively curved metrics with any prescribed finite set of isolated flat points whose absence of focal points is robust in the $C^2$-topology. Since the metrics of \cref{thm:construction} are smooth limits of negatively curved metrics but are not themselves negatively curved, they lie on the boundary of the negatively curved region. At the same time, \cref{thm:main} places them in the interior of the no-focal-points region. This separation between a pointwise curvature condition and a Jacobi-field condition is the central phenomenon of the paper.

The proof of \cref{thm:main} follows a robust Riccati mechanism. Around each flat point we choose a small ball in which a nearby metric may have a small amount of positive curvature. Gulliver's length estimate gives a uniform upper bound on the duration of each geodesic passage through such a ball. Uniform strict convexity of a larger concentric ball prevents a geodesic from leaving and immediately returning to the same perturbed region, while the finiteness and separation of the flat points gives a uniform lower bound between passages through different regions. Outside the union of the small balls the curvature of every nearby metric remains uniformly negative. For a unit-speed geodesic $\gamma$, let $J$ be a normal Jacobi field along $\gamma$ with $J(0)=0$. Write $J(t)=y(t)E(t)$, where $E$ is a parallel unit normal field along $\gamma$. We then study
\[
    v(t)=\frac{y'(t)}{y(t)},
    \qquad
    v'(t)=-K(\gamma(t))-v(t)^2.
\]
The solution may decrease while the geodesic crosses a region of small positive curvature, but the crossing time is uniformly short. Between crossings, uniform negative curvature forces the Riccati solution to recover a fixed positive level. An induction over all visits keeps $v(t)$ positive for all $t>0$, which is precisely the no focal points condition.

The finiteness assumption in \cref{thm:main} has geometric significance. If a non-positively curved surface contains a closed geodesic along which the curvature vanishes identically, the transverse Jacobi equation along that geodesic has a neutral solution. Hence the geodesic flow is not Anosov. By Ruggiero's theorem, such a metric cannot be a $C^2$-interior point of the set of metrics without conjugate points, and therefore cannot be $C^2$-robustly without focal points. Thus one cannot replace the isolated flat locus in our construction by a flat closed geodesic while retaining the same robustness conclusion.

The paper is organized as follows. In \cref{sec:prelim}, we recall the Jacobi and Riccati formulations of the no focal points property and establish the uniform local geometry used later. In \cref{sec:construction}, we prove \cref{thm:construction}. In \cref{sec:robustness}, we prove the stability theorem (\cref{thm:main}) by the Riccati induction described above. Finally, in \cref{sec:consequences}, we prove \cref{cor:mixed,cor:anosov} and present several geometric consequences of the construction.

\section{Preliminaries and uniform local geometry}\label{sec:prelim}

Throughout the paper, $M$ denotes a compact smooth surface without boundary. All metrics are smooth, and neighborhoods in the space
of metrics are taken in the $C^2$-topology. For a metric $g$, we denote by $K_g$ its Gaussian curvature, by $d_g$ its distance function, and by $B_r^g(p)$ the open metric ball of radius $r$ centered at $p$.

\subsection{Jacobi fields, focal points, and the Riccati equation}

Let $\gamma\colon\mathbb R\to M$ be a unit-speed geodesic. A vector field $J$ along $\gamma$ is a Jacobi field if
\[
    J''(t)+R(\gamma'(t),J(t))\gamma'(t)=0.
\]
On a surface, every normal Jacobi field can be written as
\[
    J(t)=y(t)E(t),
\]
where $E$ is a parallel unit normal vector field along $\gamma$. The Jacobi equation then reduces to the scalar equation
\begin{equation*}
    y''(t)+K_g(\gamma(t))y(t)=0.
\end{equation*}

\begin{definition}
A complete Riemannian manifold has \emph{no focal points} if, for every unit-speed geodesic $\gamma$ and every non-trivial Jacobi field $J$ along $\gamma$ satisfying $J(0)=0$, one has
\[
    \frac{d}{dt}\|J(t)\|^2>0
    \qquad\text{for all }t>0.
\]
\end{definition}

It is enough to verify this condition for normal Jacobi fields. Indeed, the normal and tangential components of a Jacobi field are orthogonal and the tangential component vanishing at $t=0$ is affine in $t$, so the normal component contains the only possible obstruction.

After rescaling, we may therefore consider the solution of
\begin{equation}\label{eq:ivp}
    y''(t)+K_g(\gamma(t))y(t)=0,
    \qquad y(0)=0,\quad y'(0)=1.
\end{equation}
For $t>0$ sufficiently small, $y(t)>0$. Let
\[
    T:=\sup\bigl\{s>0:y(t)>0\text{ for every }0<t<s\bigr\}\in(0,+\infty].
\]
On $(0,T)$, define the Riccati variable
\begin{equation}\label{eq:riccati-def}
    v(t):=\frac{y'(t)}{y(t)}.
\end{equation}
Then
\begin{equation}\label{eq:riccati}
    v'(t)=-K_g(\gamma(t))-v(t)^2.
\end{equation}
Moreover,
\begin{equation}\label{eq:norm-derivative}
    \frac{d}{dt}\|J(t)\|^2
    =2y(t)y'(t)
    =2v(t)y(t)^2.
\end{equation}
Thus, it suffices to prove that $T=+\infty$ and $v(t)>0$ for every $t>0$.

If $K_g\le0$, equation \eqref{eq:riccati} already reflects the convexity responsible for the no focal points property. In the perturbative situation considered here, $K_g$ may become positive near finitely many points, and the purpose of the later estimates is to ensure that the corresponding temporary decrease of $v$ cannot bring it to zero.

\subsection{Uniform normal balls and convexity}

For $p\in M$, let
\[
    f_{g,p}(x):=\frac12 d_g(p,x)^2.
\]
On a normal ball centered at $p$, this function is smooth. In $g$-normal coordinates centered at $p$,
\[
    \operatorname{Hess}_g f_{g,p}\big|_p=g_p.
\]
The following uniform version will be used repeatedly.

\begin{lemma}[Uniform local convexity]\label{lem:uniform-convexity}
Let $g_*$ be a smooth metric on $M$, and let $Z=\{p_1,\dots,p_N\}\subset M$ be finite. There exist $r_*>0$, $c>0$, and a $C^2$-neighborhood $\mathcal V$ of $g_*$ such that, for every $g\in\mathcal V$ and every $i=1,\ldots,N$, the following hold:
\begin{enumerate}[label=(\roman*)]
    \item $B_{r_*}^g(p_i)$ is a normal ball;
    \item the balls $B_{r_*}^g(p_i)$ are pairwise disjoint;
    \item on $B_{r_*}^g(p_i)$, one has
    \[
        \operatorname{Hess}_g f_{g,p_i}\ge c\,g
    \]
    as quadratic forms.
\end{enumerate}
\end{lemma}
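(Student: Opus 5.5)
The plan is to derive all three properties from uniform bounds that hold on a whole $C^2$-neighborhood. First I fix a compact $C^2$-neighborhood $\mathcal V_0$ of $g_*$ on which three quantities are uniformly controlled. The metrics are uniformly equivalent: $\lambda^{-1}g_*\le g\le \lambda g_*$ for some $\lambda>1$. The sectional curvatures satisfy $|K_g|\le \Lambda$. And the Christoffel symbols in a fixed finite atlas are uniformly bounded, so the geodesic equation has coefficients depending continuously on $g$ in $C^1$. The curvature bound is where the $C^2$ topology enters, since curvature involves second derivatives of $g$. Property (ii) is then immediate. Put $\delta:=\min_{i\ne j}d_{g_*}(p_i,p_j)>0$. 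For $g\in\mathcal V_0$ we have $d_g\ge \lambda^{-1/2}d_{g_*}$, so any $r_*<\lambda^{-1/2}\delta/2$ makes the balls $B^g_{r_*}(p_i)$ pairwise disjoint.

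For (i) I need a uniform lower bound on the injectivity radius at the points $p_i$. By Klingenberg's lemma, $\operatorname{inj}_g(p)\ge\min\{\pi/\sqrt{\Lambda},\ \tfrac12\ell_g\}$, where $\ell_g$ is the length of the shortest closed geodesic. So it suffices to bound $\ell_g$ from below uniformly on $\mathcal V_0$. I would do this directly rather than through Cheeger's lemma. In fixed coordinate charts, the uniform Christoffel bounds give a radius $\rho>0$, independent of $g$, such that every geodesic starting at any point stays in one chart for time $\rho$. Throughout that time its velocity changes by a small amount in coordinates, so it cannot close up. Alternatively one can simply invoke Cheeger's injectivity radius estimate, using the bounds on $|K|$, volume and diameter, all of which are uniform on $\mathcal V_0$.

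For (iii), the main step is a Hessian comparison. On a normal ball $B^g_r(p)$ with $r<\pi/(2\sqrt\Lambda)$, write $\rho=d_g(p,\cdot)$, so that $f=\tfrac12\rho^2$. In the radial direction $\operatorname{Hess}f(\partial_\rho,\partial_\rho)=1$. In the orthogonal direction $\operatorname{Hess}f=\rho\,\operatorname{Hess}\rho$, and the Riccati and Rauch comparison with $K_g\le\Lambda$ gives
\[
  \rho\,\operatorname{Hess}\rho\ \ge\ \sqrt\Lambda\,\rho\cot\bigl(\sqrt\Lambda\,\rho\bigr).
\]
Explicitly, on the surface $\operatorname{Hess}\rho=(y'/y)\,d\theta$-part along radial geodesics, where $y$ solves \eqref{eq:ivp}, and $y'/y$ is bounded below by the model solution through the Riccati equation \eqref{eq:riccati}. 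Since the cross terms vanish, $\operatorname{Hess}f\ge \min\{1,\sqrt\Lambda r\cot(\sqrt\Lambda r)\}\,g$. Choosing $r_*$ with $\sqrt\Lambda r_*\le\pi/4$ gives the bound with $c=\pi/4$. Finally I take $\mathcal V=\mathcal V_0$ and $r_*$ smaller than all three thresholds.

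The main obstacle is the uniformity in (i): making sure the injectivity radius bound depends only on the $C^2$ data. The comparison in (iii) is standard once the curvature bound is in place. Smoothness of $f_{g,p_i}$ inside the normal ball is automatic, so the Hessian is well defined there.
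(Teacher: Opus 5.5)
Your proof is correct, but it argues differently from the paper.

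\textbf{The paper's route.} The paper uses a soft perturbation argument. It first fixes $r_*$ for $g_*$ alone: below the $g_*$-injectivity radius at each $p_i$, small enough that the closed balls are disjoint, and small enough that $\operatorname{Hess}_{g_*}f_{g_*,p_i}\ge 2c\,g_*$. The last condition comes from $\operatorname{Hess}_{g_*}f_{g_*,p_i}|_{p_i}=(g_*)_{p_i}$ and continuity. It then appeals to continuous dependence of the exponential map and its differential on the metric in the $C^2$ topology, and concludes that all three properties survive on a small neighborhood.

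\textbf{Your route.} You make everything quantitative and uniform from the start.
\begin{enumerate}[label=(\alph*)]
\item Uniform equivalence $\lambda^{-1}g_*\le g\le\lambda g_*$ gives disjointness directly.
\item Klingenberg's lemma, combined with your no-short-geodesic-loops argument from uniform Christoffel bounds (or Cheeger's estimate), gives a uniform lower bound on the injectivity radius.
\item The Riccati/Hessian comparison along radial geodesics, using only $K_g\le\Lambda$, gives the explicit constant $c=\pi/4$ on any normal ball with $\sqrt\Lambda\,r_*\le\pi/4$.
\end{enumerate}

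\textbf{What each buys.} Your approach yields explicit constants. It isolates exactly where the $C^2$ topology enters: the curvature bound and the first-order control of the geodesic equation. It also avoids two facts the paper uses implicitly without justification:
\begin{enumerate}[label=(\alph*)]
\item the injectivity radius at $p_i$ does not drop under small $C^2$-perturbations of the metric, a genuine semicontinuity result;
\item the Hessian of the squared distance function depends continuously on $g$, uniformly on the ball.
\end{enumerate}
The paper's argument is shorter but rests on these continuity statements being taken for granted. As a side benefit, your radius condition matches the one later needed for Gulliver's lemma.

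\textbf{One small slip.} A $C^2$-neighborhood in the infinite-dimensional space of smooth metrics is never compact, so ``fix a compact $C^2$-neighborhood'' is not meaningful. You don't need compactness: a sufficiently small $C^2$-ball around $g_*$ already gives the constants $\lambda$, $\Lambda$, and the Christoffel bounds directly.
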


\begin{proof}
For the fixed metric $g_*$, choose $r_*>0$ smaller than the injectivity radius at each $p_i$ and small enough that the corresponding closed balls are pairwise disjoint. Since
\[
    \operatorname{Hess}_{g_*}f_{g_*,p_i}\big|_{p_i}=(g_*)_{p_i},
\]
continuity implies, after decreasing $r_*$ if necessary, that
\[
    \operatorname{Hess}_{g_*}f_{g_*,p_i}\ge 2c\,g_*
\]
on $B_{r_*}^{g_*}(p_i)$, for some $c>0$ independent of $i$.

The geodesic equation, the exponential map, and its differential depend continuously on the metric in the $C^2$-topology on compact subsets. Since the set of centers is finite, after shrinking to a sufficiently small $C^2$-neighborhood $\mathcal V$, the same radius remains smaller than the injectivity radius at each $p_i$, the corresponding balls remain pairwise disjoint, and the Hessians of the squared distance functions remain uniformly positive definite. Shrinking $c$ if necessary yields the stated estimate with respect to $g$, uniformly for all $g\in\mathcal V$.
\end{proof}

The next estimate is due to Gulliver and is one of the key geometric inputs in our proof.

\begin{lemma}[Gulliver's passage-length estimate]\label{lem:gulliver}
Let $(M,g)$ be a Riemannian manifold with sectional curvature bounded above by $C>0$. Suppose $B_r^g(p)$ is a normal ball and
\[
    r\sqrt C<\frac{\pi}{2}.
\]
Then every geodesic segment entirely contained in $B_r^g(p)$ has length at most $2r$.
\end{lemma}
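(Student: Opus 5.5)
The plan is to compare the distance to the center along the segment with the model space of constant curvature $C$, via Hessian comparison. Let $\sigma\colon[a,b]\to B_r^g(p)$ be a unit-speed geodesic segment contained in the ball, and set $\rho(t)=d_g(p,\sigma(t))$. Since $B_r^g(p)$ is a normal ball, $\rho$ is smooth wherever $\sigma(t)\neq p$. Along $\sigma$, work with $h(t)=\phi(\rho(t))$, where $\phi$ is a suitable model function; the natural choice is $\phi(\rho)=1-\cos(\sqrt C\rho)$, which is smooth even at $\rho=0$. The Hessian comparison theorem, valid because $K\le C$ and $\rho<r<\pi/(2\sqrt C)$ (so there are no conjugate points inside the ball), gives
\[
\operatorname{Hess}\rho\ \ge\ \sqrt C\cot(\sqrt C\rho)\,\bigl(g-d\rho\otimes d\rho\bigr).
\]
Consequently $\operatorname{Hess}\phi(\rho)\ge C\cos(\sqrt C\rho)\,g$.

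Along $\sigma$ this yields
\[
h''(t)\ \ge\ C\cos(\sqrt C\rho(t))\ \ge\ C\cos(\sqrt C r)=:\delta>0,
\]
where the positivity of $\delta$ is exactly where the hypothesis $r\sqrt C<\pi/2$ enters. In addition, $0\le h\le 1-\cos(\sqrt C r)$ on $[a,b]$.

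To get a length bound, use the gradient $|\nabla\phi(\rho)|=\sqrt C\sin(\sqrt C\rho)$ together with convexity. Since $h$ is convex, it attains its minimum at some $t_0$. The aim is to bound $b-t_0$ and $t_0-a$ separately by $r$. For this, I will avoid crude quadratic estimates (which only give $\ell\lesssim r\cdot\text{const}$) and instead use the sharp one-dimensional comparison. Write $\rho$ itself along $\sigma$: where $\rho>0$, the estimate above gives
\[
\rho''\ge\sqrt C\cot(\sqrt C\rho)\,(1-\rho'^2),
\]
with $|\rho'|\le1$. On $[t_0,b]$ we have $\rho'\ge0$. The key claim is then that $\rho(t)\ge\rho(t_0)+$ something at least comparable to $t-t_0$. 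Precisely, I would compare with the model: in the sphere of curvature $C$, a geodesic whose closest approach to $p$ is $\rho_0$ satisfies $\cos(\sqrt C\rho)=\cos(\sqrt C\rho_0)\cos(\sqrt C s)$, and it therefore exits the ball of radius $r$ within time $s\le r$ from the closest point, since $\cos(\sqrt C\rho)\le\cos(\sqrt C s)$ gives $\rho\ge s$. The comparison to prove is $\cos(\sqrt C\rho(t))\le\cos(\sqrt C\rho(t_0))\cos(\sqrt C(t-t_0))$. Setting $w=\cos(\sqrt C\rho)$, the Hessian bound reads $w''\le -Cw$. With $w'(t_0)=0$ (or a suitable one-sided sign if $t_0$ is an endpoint), the Sturm comparison for $w''+Cw\le0$ against $\cos(\sqrt C(t-t_0))$ on the interval where the latter is positive gives the inequality. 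Hence $\rho(t)\ge t-t_0$, and since $\rho<r$ we get $t-t_0<r$; likewise $t_0-a<r$. The total length is therefore at most $2r$.

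The main obstacle is making this Sturm comparison rigorous. First, I would verify that $w''\le-Cw$ holds in the barrier sense even at the point where $\sigma$ passes through $p$ (if it does); since $\cos(\sqrt C\rho)$ is smooth near $p$, this point is harmless. Second, I would check the comparison step itself: consider $W=w/\cos(\sqrt C(t-t_0))$ on $[t_0,t_0+\pi/(2\sqrt C))$ and show $W'\le0$ via the Wronskian $(w'c-wc')'\le0$. Finally, I would note that the segment's time range cannot exceed the region where the cosine is positive, because $\rho<r<\pi/(2\sqrt C)$ forces $t-t_0\le\rho<r$ at every point already reached.
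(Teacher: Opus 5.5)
Your proof is correct. It differs from the paper only in that the paper gives no argument: it invokes Gulliver's Theorem~1, remarking that the curvature bound and the radius condition give the needed radial convexity.

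You give a self-contained proof instead:
\begin{enumerate}[label=(\roman*)]
\item Hessian comparison gives $\operatorname{Hess}\bigl(1-\cos(\sqrt C\rho)\bigr)\ge C\cos(\sqrt C\rho)\,g$.
\item Hence $w=\cos(\sqrt C\rho)$ satisfies $w''+Cw\le0$ along the segment.
\item A Wronskian comparison with $\cos\bigl(\sqrt C(t-t_0)\bigr)$, starting from the point $t_0$ of closest approach, yields $d_g(p,\sigma(t))\ge|t-t_0|$, so the length is less than $2r$.
\end{enumerate}

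The citation is shorter, but it leaves the reader to check that Gulliver's hypotheses match the ones stated here. Your argument shows that $r\sqrt C<\pi/2$ is used exactly twice: to make $\cos(\sqrt C\rho)$ positive on the ball, and to keep the comparison function positive over the whole segment. The condition is sharp, since a round ball of radius slightly above $\pi/(2\sqrt C)$ contains a closed geodesic. Your argument also needs the curvature bound only on the ball, works in any dimension, and gives the stronger pointwise estimate $\rho(t)\ge|t-t_0|$.

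Some small cleanups:
\begin{enumerate}[label=(\roman*)]
\item The worry about the segment passing through $p$ is unnecessary. $\cos(\sqrt C\rho)$ is a smooth function of $\rho^2$ on the normal ball, so $w''+Cw\le0$ holds classically along all of $\sigma$. The detour through the inequality for $\rho''$ is not needed either.
\item The last step is cleanest as a contradiction. Suppose $b-t_0\ge\pi/(2\sqrt C)$. The comparison on $[t_0,t_0+\pi/(2\sqrt C))$ gives $t-t_0\le\rho(t)<r$ there. Letting $t\to t_0+\pi/(2\sqrt C)$ gives $\pi/(2\sqrt C)\le r$, contradicting the hypothesis.
\item If the maximum of $w$ occurs at the endpoint $t_0=a$, then $w'(a)\le0$. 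This is exactly the sign needed for $w'c-wc'$ to start nonpositive; the case $t_0=b$ is symmetric.
\end{enumerate}
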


\begin{proof}
This is theorem 1 of Gulliver \cite{Gulliver1975}. The upper curvature bound and the radius condition guarantee the required radial convexity, and Gulliver proves that the longest geodesic segment contained in the ball has length equal to its diameter.
\end{proof}

We shall also use the elementary fact that $C^2$-close metrics on a compact manifold are uniformly equivalent. In particular, after shrinking a $C^2$-neighborhood of $g_*$, the inclusions 
\begin{equation}\label{eq:ball-inclusion}
    B_{a/2}^{g_*}(p)\subset B_a^g(p)\subset B_{2a}^{g_*}(p),
\end{equation}
hold for every $g\in\mathcal V$, every $a>0$, and each of the finitely many centers under consideration.

\section{Conformal metrics with prescribed isolated flat points}\label{sec:construction}

We now prove \cref{thm:construction}. The argument is elementary but useful, as it allows us to prescribe the flat locus exactly.

\begin{proof}[Proof of \cref{thm:construction}]
Let $Z=\{p_1,\dots,p_N\}\subset M$ be a finite set. Choose pairwise disjoint $g_0$-normal balls
\[
    B_{R_i}^{g_0}(p_i),\qquad i=1,\dots,N.
\]
For each $i$, let
\[
    r_i(x):=d_{g_0}(p_i,x).
\]
Choose a smooth cutoff function $\xi_i\colon[0,+\infty)\to[0,1]$ such that
\[
    \xi_i(t)=1\quad\text{for }0\le t\le R_i/3,
    \qquad
    \xi_i(t)=0\quad\text{for }t\ge 2R_i/3.
\]
Define $h_i\colon M\to \mathbb R$ as
\begin{equation*}    h_i(x):=\xi_i(r_i(x))\,r_i(x)^2+1-\xi_i(r_i(x)).
\end{equation*}
The function $h_i$ is smooth on $M$. Indeed, near $p_i$, it is the smooth squared distance $r_i^2$; on the transition annulus, the distance function is smooth; and outside the normal ball the cutoff function vanishes. Furthermore,
\[
    h_i\ge0,
    \qquad
    h_i(x)=0\Longleftrightarrow x=p_i.
\]
Set
\begin{equation*}
    h:=\prod_{i=1}^N h_i.
\end{equation*}
Then $h\in C^\infty(M)$ and
\begin{equation}\label{eq:h-zero-set}
    h\ge 0, \qquad h^{-1}(0)=Z.
\end{equation}

Let
\begin{equation*}
    \alpha:=\frac{1}{\operatorname{Area}_{g_0}(M)}
    \int_M h\,dV_{g_0}>0
\end{equation*}
be the average of $h$, and define the function $F:M\to \mathbb R$ as
\begin{equation}\label{eq:F}
    F(x):=\frac{h(x)}{\alpha}-1.
\end{equation}
By construction,
\begin{equation}\label{eq:integral}
\int_M F \,dV_{g_0}
= \frac{1}{\alpha}\int_M h\,dV_{g_0} - \int_M 1 \, dV_{g_0}=0.
\end{equation}
Since $M$ is a closed surface and $F\in C^{\infty}(M)$ satisfies \eqref{eq:integral}, the Poisson equation
\begin{equation}\label{eq:poisson}
    \Delta_{g_0}u=F
\end{equation}
admits a smooth solution $u$, which is unique under the normalization
\[
    \int_M u\,dV_{g_0}=0
\]
(see, for example, \cite{Aubin1998}). Define the metric
\[
    g_Z:=e^{2u}g_0.
\]
Since $u$ is smooth, $g_Z$ is a smooth Riemannian metric conformal to $g_0$. For the Laplacian convention used here, the Gaussian curvature under a conformal change satisfies
\begin{equation*}
    K_{g_Z}=e^{-2u}\bigl(K_{g_0}-\Delta_{g_0}u\bigr).
\end{equation*}
Since $g_0$ is hyperbolic, after scaling, we may assume $K_{g_0}\equiv-1$. Hence, using \eqref{eq:F} and \eqref{eq:poisson},
\begin{align*}
    K_{g_Z}
    &=e^{-2u}(-1-F)\\
    &=-\frac{e^{-2u}}{\alpha}\,h.
\end{align*}
Since $\alpha > 0$ and $h\ge 0$ on $M$, we obtain
\[
    K_{g_Z}\le0
\]
and, by \eqref{eq:h-zero-set},
\[
    K_{g_Z}(x)=0
    \quad\Longleftrightarrow\quad
    x\in Z.
\]
This proves the first part of the theorem.

It remains to show that $g_Z$ can be approximated by negatively curved metrics. For $s>0$, set
\[
    h_s:=h+s,
    \qquad
    \alpha_s:=\frac{1}{\operatorname{Area}_{g_0}(M)}\int_M h_s\,dV_{g_0}=\alpha+s,
\]
and define
\[
    F_s:=\frac{h_s}{\alpha_s}-1.
\]
Again $\int_MF_s\,dV_{g_0}=0$. Let $u_s$ be the normalized solution of
\[
    \Delta_{g_0}u_s=F_s,
    \qquad
    \int_Mu_s\,dV_{g_0}=0.
\]
Define the metric $g_s:=e^{2u_s}g_0$. The conformal curvature formula yields
\[
    K_{g_s}=-\frac{e^{-2u_s}}{\alpha_s}h_s<0
\]
everywhere on $M$. Since $F_s\to F$ in $C^\infty(M)$ and the inverse of the Laplacian on zero-mean functions is continuous in all Hölder or Sobolev scales, elliptic regularity gives
\[
    u_s\to u\quad\text{in }C^\infty(M).
\]
Consequently, $g_s\to g_Z$ in the $C^\infty$-topology, proving that $g_Z$ is a smooth limit of negatively curved metrics.
\end{proof}

\begin{remark}\label{rem:boundary}
Since $K_{g_Z}$ vanishes on $Z$, the metric $g_Z$ is not negatively curved. The last part of the proof shows that it belongs to the $C^\infty$-closure of the negatively curved metrics. Thus, the metrics constructed in \cref{thm:construction} lie on the boundary of the negatively curved region in every $C^k$-topology, $k\ge2$.
\end{remark}

\section{$C^2$-robustness for a finite flat locus}\label{sec:robustness}
We now prove the main stability result. Let $g_*$ be a metric satisfying the hypotheses of \cref{thm:main}. If $\Z(g_*)=\varnothing$, then
$K_{g_*}<0$ everywhere and, by compactness, the curvature is
bounded above by a negative constant. The result then follows
from the $C^2$-openness of the set of negatively curved metrics. We therefore assume
\[
    \Z(g_*)=\{p_1,\dots,p_N\},\qquad N\ge1.
\]

The proof is divided into several steps. The key point is to choose
all constants uniformly with respect to the perturbed metric $g_*$.

\subsection{Uniform geometric and curvature constants}

Apply \cref{lem:uniform-convexity} to $g_*$ and the finite set $\Z(g_*)$. After decreasing the radius if necessary, fix $r_0>0$, $c>0$, and a $C^2$-neighborhood $\mathcal V$ of $g_*$ such that, for every $g\in\mathcal V$, the balls $B_{r_0}^g(p_i)$ are pairwise disjoint normal balls and
\begin{equation}\label{eq:hess-uniform}
\operatorname{Hess}_g\left(\frac12d_g(p_i,\cdot)^2\right)\ge c\,g
    \qquad\text{on }B_{r_0}^g(p_i), 
\end{equation}
for each $i=1,\dots,N$. By compactness and the continuity of curvature in the $C^2$-topology, after shrinking $\mathcal V$  if necessary, there exists $C\ge1$ such that
\begin{equation}\label{eq:abs-curv}
    |K_g|\le C
    \qquad\text{on }M,
    \qquad g\in\mathcal V.
\end{equation}
Choose $\eps>0$ such that
\begin{equation}\label{eq:epsilon-choice}
    0<\eps<\frac{r_0}{4}
    \qquad\text{and}\qquad
    2\eps\sqrt C<\frac{\pi}{4}.
\end{equation}
Since $4\eps<r_0$, the balls $B_{4\eps}^{g_*}(p_i)$
are also pairwise disjoint. For $g$ sufficiently close to $g_*$, the ball inclusions in \eqref{eq:ball-inclusion}, which follow from the uniform equivalence of the metrics, give
\begin{equation}\label{eq:base-ball-inclusion}
    B_{\eps/2}^{g_*}(p_i)\subset B_\eps^g(p_i)
\end{equation}
for every $i=1,\ldots, N$. Since $K_{g_*}<0$ on the compact set
\[
    M\setminus\bigcup_{i=1}^N B_{\eps/2}^{g_*}(p_i),
\]
there exists $\delta>0$ such that
\begin{equation*}
    K_{g_*}\le-\delta
    \qquad\text{on }
    M\setminus\bigcup_{i=1}^N B_{\eps/2}^{g_*}(p_i).
\end{equation*}
After shrinking $\mathcal V$ once more, continuity of curvature
with respect to the metric in the $C^2$-topology, together with
\eqref{eq:base-ball-inclusion}, yields
\begin{equation}\label{eq:negative-outside}
    K_g\le-\frac{\delta}{2}
    \qquad\text{on }
    M\setminus\bigcup_{i=1}^N B_\eps^g(p_i),
\end{equation}
for every $g\in\mathcal V$. Set
\begin{equation}\label{eq:lambda}
    \lambda^2:=\frac{\delta}{2}.
\end{equation}
For convenience, define the potentially perturbed region
\begin{equation*}
    D_g:=\bigcup_{i=1}^N B_\eps^g(p_i).
\end{equation*}
The connected components of $D_g$ are precisely the pairwise
disjoint balls $B_\eps^g(p_i)$.

Finally, since $K_{g_*}\le0$ everywhere on $M$, for every $\eta>0$ we may further shrink  $\mathcal V$ so that
\begin{equation}\label{eq:small-positive}
    K_g\le\eta
    \qquad\text{on }M,
\end{equation}
for every $g\in\mathcal V$. The value of $\eta$ will be chosen after the remaining constants have been fixed.

\subsection{Uniform passage and separation times}

Let $g$ be in the neighborhood fixed above, and let $\gamma$ be a unit-speed $g$-geodesic. Consider a connected component $[a,b]$ of $\gamma^{-1}(B_\eps^g(p_i))$ corresponding to a passage
through the ball. By \eqref{eq:abs-curv}, \eqref{eq:epsilon-choice}, and \cref{lem:gulliver},
\begin{equation}\label{eq:passage-time}
    b-a\le2\eps.
\end{equation}
This estimate is uniform in $g$, $\gamma$, the center $p_i$, and the visit.

We next establish a uniform positive lower bound on the time between consecutive passages through $D_g$.

\begin{lemma}[Uniform separation of visits]\label{lem:separation}
After shrinking the $C^2$-neighborhood $\mathcal V$ of $g_*$ if necessary, there exists $\tau>0$ such that the following holds. If $[a_k,b_k]$ and $[a_{k+1},b_{k+1}]$ are two consecutive connected components of $\gamma^{-1}(D_g)$, then
\[
    a_{k+1}-b_k\ge\tau.
\]
The constant $\tau$ is independent of $g$, $\gamma$, and $k$.
\end{lemma}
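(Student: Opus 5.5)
We split according to whether the two consecutive components lie in different balls or in the same ball.

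\emph{Case 1: different balls.} Suppose $\gamma(b_k)\in\overline{B_\eps^g(p_i)}$ and $\gamma(a_{k+1})\in\overline{B_\eps^g(p_j)}$ with $i\neq j$. Since $\gamma$ has unit speed, $a_{k+1}-b_k\ge d_g(\gamma(b_k),\gamma(a_{k+1}))$. By \eqref{eq:ball-inclusion} we have $B_\eps^g(p_l)\subset B_{2\eps}^{g_*}(p_l)$. The closed balls $\overline{B_{2\eps}^{g_*}(p_l)}$ are pairwise disjoint compact sets, since $4\eps<r_0$ and the $r_0$-balls are disjoint; let $\sigma>0$ be their minimal $g_*$-distance. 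Uniform equivalence of $g$ and $g_*$ (say $d_g\ge \tfrac12 d_{g_*}$ after shrinking $\mathcal V$) then gives $a_{k+1}-b_k\ge\sigma/2$.

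\emph{Case 2: the same ball $B_\eps^g(p_i)$.} Here we use the strict convexity \eqref{eq:hess-uniform} on the larger ball $B_{r_0}^g(p_i)$. Set $\phi(t)=\tfrac12 d_g(p_i,\gamma(t))^2$, which is defined while $\gamma$ stays in $B_{r_0}^g(p_i)$. There $\phi''(t)=\operatorname{Hess}_g f_{g,p_i}(\gamma',\gamma')\ge c$, so $\phi$ is strictly convex on any interval during which $\gamma$ remains in $B_{r_0}^g(p_i)$.

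Suppose first that $\gamma([b_k,a_{k+1}])\subset B_{r_0}^g(p_i)$. Then $\phi(b_k)=\phi(a_{k+1})=\eps^2/2$, while $\phi(t)\ge\eps^2/2$ on $(b_k,a_{k+1})$ because $\gamma$ is outside $D_g$ there. Strict convexity forces $\phi<\eps^2/2$ strictly inside the interval, a contradiction. Hence $\gamma$ must exit $B_{r_0}^g(p_i)$ between $b_k$ and $a_{k+1}$. It travels from $\partial B_\eps^g$ to $\partial B_{r_0}^g$ and back, so by the triangle inequality
\[
a_{k+1}-b_k\ge 2(r_0-\eps)\ge r_0 .
\]

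Taking $\tau=\min\{\sigma/2,\,r_0\}$ gives a constant independent of $g$, $\gamma$ and $k$.

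The main obstacle is the degenerate boundary behaviour in Case 2: a geodesic tangent to $\partial B_\eps^g(p_i)$, or components that touch at a single point. Convexity handles this: since $\phi$ is strictly convex, $\gamma$ cannot remain on the sphere $\{\phi=\eps^2/2\}$ for an interval, and it cannot touch the sphere from outside without entering. Consequently the components of $\gamma^{-1}(D_g)$ are genuinely separated, and the closure convention for components (open versus closed) does not affect the argument.

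The remaining routine point is to make the uniform equivalence constant explicit when shrinking $\mathcal V$, so that both the inclusion \eqref{eq:ball-inclusion} and the distance comparison used in Case 1 hold.
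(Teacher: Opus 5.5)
Your proof is essentially the paper's. It uses the same split into distinct balls, where uniform equivalence of $g$ and $g_*$ gives a positive gap, and the same ball, where strict convexity of $\tfrac12 d_g(p_i,\cdot)^2$ on $B_{r_0}^g(p_i)$ forces an excursion to $\partial B_{r_0}^g(p_i)$ and hence $a_{k+1}-b_k\ge 2(r_0-\eps)$. Your equal-endpoint contradiction is a repackaging of the paper's observation that $\rho'(b_k)\ge 0$ and $\rho''\ge c$ make $\rho$ increase after the exit.

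One sentence in your degenerate-case paragraph is backwards. Strict convexity does \emph{not} prevent $\gamma$ from touching the sphere $\{\phi=\eps^2/2\}$ from outside without entering. A tangent line to a Euclidean circle does exactly that: $\phi$ has a local minimum there, which convexity allows.

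What convexity forbids is an interior local maximum of $\phi$, i.e.\ $\gamma$ touching the sphere from inside without leaving. That is the fact needed to exclude two components sharing an endpoint $b_k=a_{k+1}$, which your main argument does not cover because the interval $(b_k,a_{k+1})$ would be empty. Outside tangencies are harmless: since $D_g$ is open, they produce no component at all.
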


\begin{proof}
Suppose first that the two visits are to the same ball $B_\eps^g(p_i)$. Set
\[
    \rho(t):=\frac12d_g(p_i,\gamma(t))^2.
\]
At the exit time $b_k$, one has $\rho(b_k)=\eps^2/2$ and for $t>b_k$ sufficiently close to $b_k$, the geodesic lies outside the ball, so $\rho(t)\ge\rho(b_k)$ and hence $\rho'(b_k)\ge0$. As long as $\gamma(t)\in B_{r_0}^g(p_i)$, equation \eqref{eq:hess-uniform} and the geodesic equation give
\[
    \rho''(t)
    =\operatorname{Hess}_g\left(\frac12d_g(p_i,\cdot)^2\right)
      (\gamma'(t),\gamma'(t))
    \ge c>0.
\]
Thus, $\rho$ is strictly increasing after $b_k$ while the geodesic stays in $B_{r_0}^g(p_i)$. It follows that the geodesic cannot return to $B_\eps^g(p_i)$ without first reaching $\partial B_{r_0}^g(p_i)$. The outward segment from radius $\eps$ to radius $r_0$ has length at least $r_0-\eps$, as does the segment from that boundary to the next entrance into $B_\eps^g(p_i)$. Since $\gamma$ has unit speed, we obtain
\begin{equation*}
    a_{k+1}-b_k\ge2(r_0-\eps).
\end{equation*}

Suppose now that the consecutive visits are to distinct balls, say $B_\eps^g(p_i)$ and $B_\eps^g(p_j)$ with $i\ne j$. Since the set of centers is finite and the corresponding $g_*$-balls are disjoint, the minimum $g_*$-distance between distinct centers is positive. Uniform equivalence of nearby metrics and our choice of $\eps$ imply, after shrinking the neighborhood $\mathcal V$, the existence of $\sigma>0$ such that
\begin{equation*}
\operatorname{dist}_g\bigl(B_\eps^g(p_i),B_\eps^g(p_j)\bigr)\ge\sigma
    \qquad\text{for }i\ne j.
\end{equation*}
Hence $a_{k+1}-b_k\ge\sigma$. Taking
\[
    \tau:=\min\{2(r_0-\eps),\sigma\}>0
\]
proves the lemma.
\end{proof}

\subsection{Choice of a positive Riccati level}

We now fix the level which will be propagated through all visits. Define
\begin{equation*}
    m:=\min\left\{
        \frac{\lambda}{4},
        \frac{\lambda^2\tau}{4},
        \frac{1}{8\eps},
        \sqrt C
    \right\}>0.
\end{equation*}
The constants entering this definition depend only on the base metric, the chosen finite flat set, and the neighborhood already fixed.

Now choose $\eta>0$ so small that
\begin{equation}\label{eq:eta-choice}
    \eta\le\frac{m}{8\eps},
\end{equation}
and shrink the $C^2$-neighborhood one last time so that \eqref{eq:small-positive} holds for this value of $\eta$. The induction will show
\begin{equation*}
    v(a_k)\ge m
    \quad\Longrightarrow\quad
    v(b_k)\ge\frac m2,
\end{equation*}
and then
\begin{equation*}
    v(b_k)\ge\frac m2
    \quad\Longrightarrow\quad
    v(a_{k+1})\ge m.
\end{equation*}

\subsection{Loss during one passage}

\begin{lemma}[Controlled loss inside $D_g$]\label{lem:loss}
Let $[a,b]$ be a passage through one component of $D_g$. Suppose the Riccati solution is defined on $[a,b]$ and $v(a)\ge m$. Then
\[
    v(t)\ge\frac m2
    \qquad\text{for every }t\in[a,b].
\]
In particular, $v(b)\ge m/2$.
\end{lemma}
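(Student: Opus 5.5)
The plan is a continuity (barrier) argument on the Riccati equation \eqref{eq:riccati}, using the passage-length bound \eqref{eq:passage-time} and the smallness \eqref{eq:small-positive}--\eqref{eq:eta-choice} of the positive part of the curvature. Set
\[
    t_0:=\sup\bigl\{s\in[a,b]: v(t)\ge m/2 \text{ for all } t\in[a,s]\bigr\}.
\]
Since $v(a)\ge m>m/2$ and $v$ is continuous on $[a,b]$, we have $t_0>a$. It suffices to show that on $[a,t_0]$ the stronger bound $v>m/2$ holds; continuity then forces $t_0=b$.

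On $[a,b]$ the curvature satisfies $K_g\le\eta$, so \eqref{eq:riccati} gives $v'\ge-\eta-v^2$. I will bound $v$ from above on $[a,t_0]$ and then integrate. For the upper bound, note that whenever $v(t)\ge\sqrt C$ we have $v'\le |K_g|-v^2\le C-v^2\le0$ by \eqref{eq:abs-curv}. Hence $v(t)\le\max\{v(a),\sqrt C\}$ on $[a,b]$.

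This creates a difficulty when $v(a)$ is large. In that case the quadratic term can drive $v$ down quickly, but it cannot push $v$ below $\sqrt C\ge m$ except through the bounded term $-\eta$. So I split into two cases.

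\textbf{Case $v(a)\ge\sqrt C$.} Let $s$ be the first time $v$ reaches $\sqrt C$; if there is no such time, we are done because $\sqrt C\ge m$. After time $s$, $v$ stays at most $\sqrt C$ by the sign argument above. Restarting at $s$ with $v(s)=\sqrt C\ge m$ reduces to the next case.

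\textbf{Case $m\le v(a)\le\sqrt C$.} Then $v\le\sqrt C$ throughout. Here the naive estimate $v'\ge-\eta-C$ is too weak, since $C$ is not small. The better route is to compare with the Riccati equation with constant coefficient $\eta$. The solution $w$ of $w'=-\eta-w^2$ with $w(a)=v(a)$ satisfies $v\ge w$ by the standard comparison principle for first-order ODEs. Alternatively, $1/v$ satisfies $(1/v)'=(K_g+v^2)/v^2\le \eta/v^2+1$. While $v\ge m/2$ this gives
\[
    (1/v)'\le 1+\frac{4\eta}{m^2}.
\]
Integrating over a time interval of length at most $2\eps$ yields
\[
    \frac1{v(t)}\le\frac1m+2\eps+\frac{8\eps\eta}{m^2}\le\frac1m+\frac1{4m}+\frac1{m}\cdot\frac{1}{1}.
\]
Here I used $2\eps\le 1/(4m)$, which follows from $m\le 1/(8\eps)$, and $8\eps\eta\le m$, which follows from \eqref{eq:eta-choice}.

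The main obstacle is arranging these constants so that the total stays strictly below $2/m$. With the bounds above the total is $9/(4m)$, which slightly exceeds $2/m$, so the estimate must be sharpened. First I will replace the crude bound on the $\eta$-term by $8\eps\eta/m^2\le 1/(8\eps m)\cdot(\ldots)$, or more simply tighten the chosen constants: $2\eps\le 1/(4m)$ together with $8\eps\eta/m^2\le 1/m$ is too generous. If the stated definitions of $m$ and $\eta$ do not close, I will use the comparison solution $w$ explicitly,
\[
    w(t)=\sqrt\eta\,\tan\Bigl(\arctan\bigl(v(a)/\sqrt\eta\bigr)-\sqrt\eta\,(t-a)\Bigr),
\]
and exploit the fact that $\sqrt\eta\cdot 2\eps$ is tiny.

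In either version the strict inequality $1/v<2/m$ on $[a,t_0]$ gives $v>m/2$ there, which closes the continuity argument and yields $v\ge m/2$ on all of $[a,b]$.
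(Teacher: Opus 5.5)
Your proposal has a genuine gap: it does not establish the bound. The one estimate you carry out, integrating $(1/v)'\le 1+4\eta/m^2$, gives only $1/v\le 9/(4m)$, i.e.\ $v\ge 4m/9$, as you note yourself. This is not fixable by slightly sharpening constants.

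When the paper's constraints $m\le 1/(8\eps)$ and $\eta\le m/(8\eps)$ hold with equality, consider the bootstrap ``$v\ge m/\beta$ on $[a,t]$''. It yields $1/v\le(1+\tfrac14+\tfrac{\beta^2}{4})/m$. This closes only if $\beta^2-4\beta+5<0$, which is impossible for every $\beta$. The cause is that passing to $1/v$ turns the constant $\eta$ into the growing term $\eta/v^2$. Freezing $v$ at its lowest admissible value then costs a factor that the constants cannot absorb.

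Your fallbacks do not fill the gap either. Tightening the constants is not available, since $m$ and $\eta$ are fixed before the lemma. The explicit $\tan$-formula route is only announced. Moreover, $2\eps\sqrt\eta$ is not ``tiny'' under the standing choices; one only gets $2\eps\sqrt\eta\le 1/4$. What matters is its size relative to $\arctan(m/\sqrt\eta)$, which can itself be small, so that route needs an actual computation you have not supplied.

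The missing idea is to compare in $v$ itself rather than in $1/v$, and to start the comparison solution at level $m$ rather than at $v(a)$:
\begin{itemize}
\item Let $w'=-\eta-w^2$ with $w(a)=m$. While $w>0$ it is decreasing, so $0<w\le m$ and $w'\ge-(\eta+m^2)$.
\item Over a passage of length at most $2\eps$ this gives $w\ge m-2\eps\eta-2\eps m^2\ge m-\tfrac m4-\tfrac m4=\tfrac m2$. This is exactly what the two constraints on $m$ and $\eta$ are designed for.
\item Since $v(a)\ge m=w(a)$ and $v'\ge-\eta-v^2$, the function $\zeta=v-w$ satisfies $\zeta'\ge-(v+w)\zeta$ with $\zeta(a)\ge0$. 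Hence $v\ge w\ge m/2$ on $[a,b]$.
\end{itemize}
This also makes your upper bound $v\le\max\{v(a),\sqrt C\}$ and the case split at $\sqrt C$ unnecessary. Starting $w$ at $m$ is what keeps the quadratic term small. With $w(a)=v(a)$, possibly as large as $\sqrt C$, the linear bound would give only the useless $w'\ge-(\eta+C)$.
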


\begin{proof}
Inside $D_g$ we only use the upper bound $K_g\le\eta$. Equation \eqref{eq:riccati} gives
\begin{equation}\label{eq:riccati-inside}
    v'\ge-\eta-v^2.
\end{equation}
Let $w$ be the solution of
\begin{equation}\label{eq:w-equation}
    w'=-\eta-w^2,
    \qquad
    w(a)=m.
\end{equation}
As long as $w>0$, it is decreasing and therefore $0<w\le m$. Consequently,
\begin{equation}\label{eq:w}
 w'\ge-(\eta+m^2).   
\end{equation}
Using \eqref{eq:eta-choice} and the fact that $m\le1/8\eps$, we obtain
\[
    2\eps\eta\le\frac m4,
    \qquad
    2\eps m^2\le\frac m4.
\]
Consequently,
\begin{equation}\label{eq:loss-budget}
    2\eps(\eta+m^2)\le\frac m2.
\end{equation}
If $w$ were to vanish for the first time at some $t_0\in(a,b]$,
then \eqref{eq:passage-time}, \eqref{eq:w} and \eqref{eq:loss-budget} would give
\[
    w(t_0)
    \ge m-(\eta+m^2)(t_0-a)
    \ge m-2\eps(\eta+m^2)
    \ge\frac m2>0,
\]
a contradiction. Thus $w$ is positive on the whole passage and
\begin{equation}\label{eq:w-exit}
    w(t)\ge\frac m2,
    \qquad t\in[a,b].
\end{equation}

It remains to compare $v$ and $w$. Set $\zeta=v-w$. By \eqref{eq:riccati-inside} and \eqref{eq:w-equation},
\[
    \zeta'\ge-(v^2-w^2)=-(v+w)\zeta.
\]
Therefore
\[
    \frac{d}{dt}\left(
      \zeta(t)\exp\!\left(\int_a^t(v(s)+w(s))\,ds\right)
    \right)\ge0.
\]
Since $\zeta(a)=v(a)-w(a)\ge0$, it follows that $\zeta(t)\ge0$ throughout $[a,b]$. Hence $v\ge w$ on the whole passage, and \eqref{eq:w-exit} completes the proof.
\end{proof}

\subsection{Recovery in the negative-curvature region}

\begin{lemma}[Recovery between visits]\label{lem:recovery}
Let $[b,a^+]$ be the interval between two consecutive visits to $D_g$. If $v(b)\ge m/2$, then $v(t)>0$ throughout $[b,a^+]$, the solution reaches the level $m$ before the next visit, and
\[
    v(a^+)\ge m.
\]
\end{lemma}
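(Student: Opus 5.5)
On $[b,a^+]$ the geodesic lies outside $D_g$, except possibly at the endpoints, where continuity of $K_g$ applies. Hence \eqref{eq:negative-outside} and \eqref{eq:lambda} give $K_g\le-\lambda^2$ on this interval, and the Riccati equation \eqref{eq:riccati} yields the differential inequality
\[
    v'\ge\lambda^2-v^2 \qquad\text{on }[b,a^+].
\]
The plan is a comparison argument with constants chosen from the definition of $m$. Two facts from that definition will be used: $m\le\lambda/4$, so every level at or below $m$ lies well under the equilibrium $\lambda$; and $m\le\lambda^2\tau/4$, which will give enough time to climb back up.

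\emph{Positivity and staying above $m/2$.} First I show $v$ cannot drop below $m/2$ on $[b,a^+]$. Suppose $v(t)<m$ at some point. There $v^2<m^2\le\lambda^2/16$, so $v'\ge\lambda^2-v^2>\tfrac{15}{16}\lambda^2>0$. Thus $v$ is strictly increasing whenever it is below $m$. Starting from $v(b)\ge m/2$, a standard first-crossing argument gives $v\ge m/2>0$ on the whole interval. In particular $y$ does not vanish there, so $T>a^+$ and the Riccati solution remains defined. Strictly, I also need to know that the solution is defined on the interval at all. For this I would argue on the maximal subinterval where $y>0$ and note that $v$ is bounded below there, so $y$ cannot reach zero (since $y'=vy$, we get $y\ge y(b)e^{\int v}>0$). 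Blow-up of $v$ to $+\infty$ is impossible because $v'\le C-v^2$ bounds $v$ above once it is large.

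\emph{Reaching level $m$.} By \cref{lem:separation}, the interval has length $a^+-b\ge\tau$. While $v<m$, the inequality above gives $v'\ge\tfrac{15}{16}\lambda^2\ge\lambda^2/2$. If $v$ stayed below $m$ on all of $[b,a^+]$, then
\[
    v(a^+)\ge \frac m2+\frac{\lambda^2}{2}\tau\ge\frac m2+2m>m,
\]
using $\lambda^2\tau\ge4m$, which is a contradiction. So $v$ reaches $m$ at some $t_1\le a^+$. After $t_1$ it cannot fall back below $m$: at any time where $v=m$ we have $v'>0$, so the set $\{v\ge m\}$ is forward invariant on this interval. Hence $v(a^+)\ge m$.

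The main obstacle is not the estimate itself, which is routine, but the bookkeeping at the endpoints. At $b$ and $a^+$ the geodesic is on $\partial B^g_\eps(p_i)$, and strictly speaking \eqref{eq:negative-outside} is stated on the complement of the open balls, so it covers the boundary. I must check that this indeed includes the closed interval, and that the first-crossing arguments are done cleanly on a closed interval with a merely continuous coefficient $K_g(\gamma(t))$. These are the points I would write out carefully.
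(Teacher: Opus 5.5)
Your proposal is correct and follows essentially the same argument as the paper: the inequality $v'\ge\lambda^2-v^2$ outside $D_g$, first-crossing arguments based on $m\le\lambda/4$ that keep $v\ge m/2$ and forbid a downward crossing of the level $m$, and the bound $m\le\lambda^2\tau/4$ combined with \cref{lem:separation} to guarantee that $m$ is reached by the entrance time $a^+$. Your extra remarks on why $v$ is defined on the whole interval (the paper leaves this to the final inductive proof) are a harmless addition. One small slip: the implication from $v<m$ to $v^2<m^2$ requires $0\le v<m$, as the paper states, but this does no damage because you only use it once $v\ge m/2$ is established.
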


\begin{proof}
On the whole interval $[b,a^+]$ the geodesic stays outside $D_g$, so by \eqref{eq:negative-outside} and \eqref{eq:lambda},
\begin{equation}\label{eq:negative-riccati}
    v'\ge\lambda^2-v^2.
\end{equation}
If $v(b)\ge m$, then $v$ cannot cross the level $m$ downward. Indeed, at a hypothetical first downward crossing one would have $v(t_0)=m$ and $v'(t_0)\le0$, whereas
\[
    v'(t_0)\ge\lambda^2-m^2
    \ge\frac{15}{16}\lambda^2>0
\]
since $m\le\lambda/4$. Assume now $m/2\le v(b)<m$. As long as $0<v<m$, we have
\[
    v'\ge\lambda^2-v^2
    \ge\frac{15}{16}\lambda^2>0.
\]
Thus $v$ increases and cannot reach zero.  Moreover,
\begin{align*} 
v(t) &= v(b)+\int_{b}^{t}v'(s)\,ds\\ &\ge \frac{m}{2} + \frac{15}{16}\lambda^2(t-b). \end{align*} 
Therefore, the time required for $v$ to grow from $m/2$ to $m$ is at most
\begin{equation*}
    T_m:=\frac{m/2}{(15/16)\lambda^2}
    =\frac{8m}{15\lambda^2}.
\end{equation*}
Since $m\le\lambda^2\tau/4$,
\[
    T_m\le\frac{2}{15}\tau<\tau.
\]
By \cref{lem:separation}, $a^+-b\ge\tau$, so the level $m$ is reached strictly before the next entrance. Once it is reached, the first part of the argument prevents a downward crossing. Hence $v(a^+)\ge m$.
\end{proof}

\subsection{The initial segment}

We now start the induction. Let $y$ be the solution of \eqref{eq:ivp}, and let $T$ denote its first possible positive zero time as defined before \eqref{eq:riccati-def}, and let $v(t)=y'(t)/y(t)$ on $(0,T)$.

Suppose first that $\gamma(0)$ belongs to one of the closed balls $\overline{B_\eps^g(p_i)}$, and if $\gamma(0)$ lies on the boundary, we include the case in which the geodesic initially points into the ball. Let $b_1$ be the first exit time from that ball. By \eqref{eq:passage-time},
\[
    b_1\le2\eps.
\]
Consider the comparison solution
\[
    z''+Cz=0,
    \qquad z(0)=0,\quad z'(0)=1,
\]
namely
\begin{equation*}
    z(t)=\frac{\sin(\sqrt C\,t)}{\sqrt C}.
\end{equation*}
By \eqref{eq:epsilon-choice}, $z(t)>0$ on $(0,2\eps]$. Since $K_g(\gamma(t))\le C$, Sturm comparison implies that $y$ has no positive zero before the first positive zero of $z$. In particular,
\[
    y(t)>0,
    \qquad 0<t\le b_1.
\]
Define
\[
    W(t):=y'(t)z(t)-y(t)z'(t).
\]
Then
\[
    W'(t)=\bigl(C-K_g(\gamma(t))\bigr)y(t)z(t)\ge0.
\]
Since $W(0)=0$, we have that $W(t)\ge0$ on $[0,b_1]$. Dividing by $y(t)z(t)>0$ yields
\begin{equation*}
    v(t)=\frac{y'(t)}{y(t)}
    \ge\frac{z'(t)}{z(t)}
    =\sqrt C\cot(\sqrt C\,t).
\end{equation*}
Since $b_1\le2\eps$ and $2\eps\sqrt C<\pi/4$,
\[
    v(b_1)
    \ge\sqrt C\cot(\sqrt C\,b_1)
    \ge\sqrt C
    \ge m.
\]
Thus the first passage is exited at level at least $m$.

Suppose next that $\gamma(0)\notin D_g$, with the boundary case interpreted so that the geodesic initially points outward. As long as the geodesic remains outside $D_g$, inequality \eqref{eq:negative-riccati} holds. From the initial conditions,
\[
    y(t)=t+O(t^3),
    \qquad
    y'(t)=1+O(t^2),
\]
and therefore
\begin{equation*}
    v(t)=\frac{y'(t)}{y(t)}\longrightarrow+\infty
    \qquad\text{as }t\to0^+.
\end{equation*}
If $a_1>0$ is the first entrance time into $D_g$, choose $t_0\in(0,a_1)$ sufficiently such that $v(t_0)\ge m$. The same first-crossing argument used in \cref{lem:recovery} shows that $v$ cannot fall below $m$ before $a_1$, so
\[
    v(a_1)\ge m.
\]
If the geodesic never enters $D_g$, then $v(t)\ge m$ for every sufficiently large $t$ after $t_0$, and in fact remains positive for all $t>0$.

\subsection{Inductive conclusion}

We can now complete the proof of the main theorem.

\begin{proof}[Proof of \cref{thm:main}]
Take a metric $g$ in the final $C^2$-neighborhood constructed above, a unit-speed $g$-geodesic $\gamma$, and a non-trivial normal Jacobi field $J$ with $J(0)=0$. After multiplication by a non-zero scalar, we reduce to \eqref{eq:ivp}. Let $v$ be the Riccati solution of \eqref{eq:riccati} on its maximal interval $(0,T)$.

The initial-segment argument shows that the first relevant entrance or exit level is at least $m$. For every subsequent passage $[a_k,b_k]$ through $D_g$, \cref{lem:loss} gives
\[
    v(a_k)\ge m
    \quad\Longrightarrow\quad
    v(t)\ge\frac m2
    \quad(a_k\le t\le b_k).
\]
Between visits, \cref{lem:recovery} gives
\[
    v(b_k)\ge\frac m2
    \quad\Longrightarrow\quad
    v(a_{k+1})\ge m,
\]
and keeps $v$ positive throughout the intervening interval. Therefore the induction propagates over every visit of the geodesic to $D_g$.

The uniform separation time $\tau>0$ rules out an accumulation of infinitely many visits in finite time. Hence the preceding estimates apply on every compact time interval contained in $(0,T)$. They show that $v$ stays positive and, away from the initial instant, is bounded below by a positive constant determined by the current stage of the induction. In particular, if $T<+\infty$, then $y$ could not converge to zero as $t\to T^-$, because $y'>0$ whenever $v>0$ and $y>0$. This contradicts the definition of $T$. Thus $T=+\infty$.

Consequently $v(t)>0$ for every $t>0$. By \eqref{eq:norm-derivative},
\[
    \frac{d}{dt}\|J(t)\|^2
    =2v(t)y(t)^2>0
    \qquad\text{for every }t>0.
\]
Since $\gamma$ and $J$ were arbitrary, $(M,g)$ has no focal points. All constants were chosen uniformly on the final $C^2$-neighborhood, so every metric in that neighborhood has no focal points. This proves that $g_*$ is a $C^2$-interior point of $\NoFoc$.
\end{proof}

\begin{remark}[Where finiteness is used]\label{rem:finiteness}
The proof uses finiteness of $\Z(g_*)$ in two places. First, it yields a uniform strictly negative curvature bound outside finitely many small balls. Second, it yields a uniform positive separation between different balls. The same Riccati mechanism therefore extends verbatim to any situation in which the possible positive-curvature region is a finite union of uniformly short ``bad'' components separated by a uniformly positive amount of negative-curvature travel time.
\end{remark}

\section{Consequences and geometric interpretation}\label{sec:consequences}

We conclude by proving the corollaries stated in the introduction and explaining the role of isolated flat points.

\subsection{Arbitrarily small mixed-curvature perturbations}

\begin{proof}[Proof of \cref{cor:mixed}]
Choose $p\in\Z(g_*)$. Let $r(x)=d_{g_*}(p,x)$ in a small normal ball and choose a smooth cutoff $\chi$ supported in that ball with $\chi\equiv1$ near $p$. Define
\[
    \psi(x):=-\chi(x)r(x)^2\le 0.
\]
Then $\psi$ is smooth and, with our Laplacian convention,
\[
    \Delta_{g_*}\psi(p)<0.
\]
For $s>0$ let
\[
    g_s:=e^{2s\psi}g_*.
\]
The conformal curvature formula gives
\[
    K_{g_s}
    =e^{-2s\psi}\bigl(K_{g_*}-s\Delta_{g_*}\psi\bigr).
\]
Since $K_{g_*}(p)=0$ and $\Delta_{g_*}\psi(p)<0$,
\[
    K_{g_s}(p)>0
\]
for every sufficiently small $s>0$. On the other hand, choose a point $q$ outside the support of $\psi$; then $K_{g_s}(q)=K_{g_*}(q)<0$. Hence $K_{g_s}$ takes both signs. Clearly $g_s\to g_*$ in $C^\infty$ as $s\to0$, so every $C^2$-neighborhood of $g_*$ contains such a mixed-curvature metric. Taking $s$ small enough that $g_s\in\mathcal U$, \cref{thm:main} implies that this metric has no focal points.
\end{proof}

This clarifies the sense in which \cref{thm:main} goes beyond
the pointwise curvature condition: the neighborhood of metrics
without focal points necessarily contains metrics with positive
curvature somewhere.

\subsection{Anosov dynamics}

\begin{proof}[Proof of \cref{cor:anosov}]
Let $g\in\mathcal U$. Since $\mathcal U$ is open, there exists a $C^2$-neighborhood $\mathcal V_g$ of $g$ such that
\[
    \mathcal V_g\subset\mathcal U\subset\NoFoc\subset\NoConj.
\]
Thus $g$ is a $C^2$-interior point of the set of metrics without conjugate points. Ruggiero's theorem \cite{Ruggiero1991} implies that the geodesic flow of $g$ is Anosov.
\end{proof}

Combining the two main theorems gives the following concrete formulation.

\begin{corollary}\label{cor:hyperbolic-prescribed}
Let $(M,g_0)$ be a compact hyperbolic surface and let $Z\subset M$ be a non-empty finite set. There exists a smooth conformal metric $g_Z$ such that:
\begin{enumerate}[label=(\roman*)]
    \item $K_{g_Z}\le0$ and $K_{g_Z}^{-1}(0)=Z$;
    \item $g_Z$ is a $C^\infty$-limit of negatively curved metrics;
    \item $g_Z$ is a $C^2$-interior point of the set of metrics without focal points;
    \item a $C^2$-neighborhood of $g_Z$ consists entirely of Anosov metrics without focal points and contains metrics whose Gaussian curvature takes both signs.
\end{enumerate}
\end{corollary}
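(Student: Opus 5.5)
The corollary follows by assembling results already proved, so the proof will work through items (i)--(iv) in order.

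First apply \cref{thm:construction} to $(M,g_0)$ and $Z$. This produces a smooth $u$ and the conformal metric $g_Z=e^{2u}g_0$. It satisfies $K_{g_Z}\le0$ with $K_{g_Z}(x)=0$ exactly when $x\in Z$, which is (i). The approximation statement in the same theorem (the family $g_s=e^{2u_s}g_0$ with $K_{g_s}<0$ and $g_s\to g_Z$ in $C^\infty$) gives (ii).

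For (iii), note that the hypotheses of \cref{thm:main} hold for $g_*=g_Z$: $M$ is a compact surface, $K_{g_Z}\le0$, and $\Z(g_Z)=Z$ is finite. \Cref{thm:main} therefore yields a $C^2$-neighborhood $\mathcal U$ of $g_Z$ with $\mathcal U\subset\NoFoc$, which is (iii).

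For (iv), take the same $\mathcal U$. Every metric in it has no focal points by the choice of $\mathcal U$, and every metric in it is Anosov by \cref{cor:anosov}. That corollary rests on Ruggiero's theorem, applied to each $g\in\mathcal U$ using openness of $\mathcal U$ together with $\NoFoc\subset\NoConj$. Since $Z\ne\varnothing$, \cref{cor:mixed} shows that $\mathcal U$ contains a smooth metric whose curvature takes both signs. Concretely, this is a conformal bump $e^{2s\psi}g_Z$ with $s$ small, centered at a point of $Z$.

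There is no real obstacle here. The one point needing care is the order of choices in (iv): fix $\mathcal U$ first, as given by \cref{thm:main}, and only then choose $s$ small enough that the mixed-curvature metric lands inside this fixed $\mathcal U$. With that order, "Anosov", "no focal points" and "mixed curvature" all refer to one and the same neighborhood.
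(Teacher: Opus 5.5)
Your proposal is correct and matches the paper's proof: (i)--(ii) come from \cref{thm:construction}, (iii) from \cref{thm:main} (applicable because $\Z(g_Z)=Z$ is finite), and (iv) from \cref{cor:mixed,cor:anosov}. Fixing $\mathcal U$ first and only then choosing $s$ is exactly how the paper's proof of \cref{cor:mixed} places the mixed-curvature metric inside $\mathcal U$.
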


\begin{proof}
Items (i) and (ii) are \cref{thm:construction}. Item (iii) follows from \cref{thm:main}, and item (iv) follows from \cref{cor:mixed,cor:anosov}.
\end{proof}

\subsection{Why a flat closed geodesic is different}

The assumption that the flat set in \cref{thm:main} consists
of isolated points is not merely a technical convenience.
Suppose that $g$ has non-positive curvature and admits a closed
geodesic $\gamma$ along which

\[
    K_g(\gamma(t))=0
    \qquad\text{for every }t.
\]
Along $\gamma$, the normal Jacobi equation reduces to
\[
    y''=0.
\]
The solution with $y(0)=1$ and $y'(0)=0$ is constant. It defines a non-zero transverse vector whose norm is unchanged by the linearized geodesic flow along the periodic orbit. This neutral transverse behavior is incompatible with the uniform exponential contraction/expansion required by the Anosov property. Hence the geodesic flow is not Anosov.

If such a metric were $C^2$-robustly without focal points, it would in particular be a $C^2$-interior point of the set of metrics without conjugate points, and Ruggiero's theorem would force its geodesic flow to be Anosov, a contradiction. Therefore:

\begin{proposition}\label{prop:closed-flat}
A non-positively curved metric on a compact surface containing a closed geodesic along which the Gaussian curvature vanishes identically cannot be a $C^2$-interior point of the set of metrics without focal points.
\end{proposition}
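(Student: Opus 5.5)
The argument is by contradiction, combining Ruggiero's characterization with a direct check that the flat closed geodesic is not hyperbolic. Let $g$ be non-positively curved on the compact surface $M$, and let $\gamma$ be a closed unit-speed $g$-geodesic of period $\ell>0$ with $K_g(\gamma(t))=0$ for all $t$. Suppose, to reach a contradiction, that $g$ lies in the $C^2$-interior of $\NoFoc$. Then some $C^2$-neighborhood $\mathcal W$ of $g$ satisfies $\mathcal W\subset\NoFoc\subset\NoConj$, because no focal points implies no conjugate points. So $g$ is a $C^2$-interior point of $\NoConj$, and Ruggiero's theorem \cite{Ruggiero1991} shows that the geodesic flow of $g$ is Anosov. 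The task therefore reduces to proving that this flow is \emph{not} Anosov.

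To show the flow is not Anosov, I will study the linearized flow along the periodic orbit $\gamma'$. On a surface, the differential of the geodesic flow acts on the normal part of the tangent space to $SM$ through pairs $(y,y')$. Here $y$ solves the scalar Jacobi equation $y''+K_g(\gamma(t))y=0$, and the horizontal/vertical splitting is given by the parallel normal field $E$. Along $\gamma$ this equation becomes $y''=0$, so the normal linearized return map after one period is the unipotent matrix
\[
    P=\begin{pmatrix}1&\ell\\0&1\end{pmatrix}.
\]
Both eigenvalues of $P$ equal $1$, and the eigenvector $(1,0)$ is the constant Jacobi field $J=E$, whose norm is preserved for all time. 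If the flow were Anosov, the normal space would split into $E^s\oplus E^u$ with exponential contraction and expansion, and the Poincaré return map of the periodic orbit would then be hyperbolic with no eigenvalue of modulus one. The vector $(1,0)$ contradicts this: its iterates $P^k(1,0)=(1,0)$ neither decay nor grow. Its nonzero stable component would have to contract exponentially under forward iteration, and its nonzero unstable component would have to contract under backward iteration, which is impossible for a fixed vector.

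One technical point needs care. The Sasaki norm on the normal bundle is equivalent to $\sqrt{y^2+y'^2}$, and for the flow-invariant splitting restricted to a periodic orbit, $E^s$ and $E^u$ are $P$-invariant lines. An invariant line must be an eigenline of $P$, so it must be the span of $(1,0)$, which has eigenvalue $1$ and therefore cannot be contracting or expanding. This contradiction finishes the argument.

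The main obstacle is justifying carefully that the normal linearized flow along a periodic orbit of a surface's geodesic flow is exactly the Jacobi-field map $(y(0),y'(0))\mapsto(y(\ell),y'(\ell))$, and that $E^s$ and $E^u$ restrict to invariant lines of this map. This is the standard identification of $d\phi_t$ with Jacobi fields, and I would invoke it directly. The rest is linear algebra on a $2\times 2$ unipotent matrix.
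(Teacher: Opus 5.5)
Your proof is correct and follows the paper's argument. Both use $\NoFoc\subset\NoConj$ and Ruggiero's theorem to reduce the claim to showing the geodesic flow is not Anosov, and both detect this through the neutral constant solution of $y''=0$ along the flat closed geodesic; you make this explicit by writing the normal return map as the unipotent matrix $P$. One small correction: if $M$ is non-orientable and $\gamma$ reverses orientation, then $E(\ell)=-E(0)$, so in a fixed frame the return map is $-P$ (alternatively, pass to the doubled orbit of period $2\ell$), and the conclusion is unchanged because the eigenvalues still have modulus one.
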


This explains why replacing the isolated zero-curvature points in \cref{thm:construction} by an entire flat closed geodesic changes the stability problem fundamentally.




\bigskip
\noindent School of Mathematics, Sun Yat-sen University, Zhuhai, China\\
\noindent Email: \texttt{vilchez@mail.sysu.edu.cn}

\medskip
\noindent School of Mathematics, Sun Yat-sen University, Zhuhai, China\\
\noindent Email: \texttt{sergio@mail.sysu.edu.cn}


\begin{thebibliography}{99}

\bibitem{Anosov1967}
D.~V. Anosov,
\emph{Geodesic flows on closed Riemannian manifolds of negative curvature},
Proceedings of the Steklov Institute of Mathematics, Vol.~90,
American Mathematical Society, Providence, RI, 1967.

\bibitem{Aubin1998}
T.~Aubin,
\emph{Some Nonlinear Problems in Riemannian Geometry},
Springer Monographs in Mathematics, Vol.~27,
Springer-Verlag, Berlin, 1998.

\bibitem{Eberlein1973}
P.~Eberlein,
\emph{When is a geodesic flow of Anosov type? I},
J. Differential Geom. \textbf{8} (1973), no.~3, 437--463.

\bibitem{Gulliver1975}
R.~Gulliver,
\emph{On the variety of manifolds without conjugate points},
Trans. Amer. Math. Soc. \textbf{210} (1975), 185--201.

\bibitem{Klingenberg1974}
W.~Klingenberg,
\emph{Riemannian manifolds with geodesic flows of Anosov type},
Ann. of Math. (2) \textbf{99} (1974), no.~1, 1--13.


\bibitem{Ruggiero1991}
R.~O. Ruggiero,
\emph{On the creation of conjugate points},
Math. Z. \textbf{208} (1991), no.~1, 41--56.

\end{thebibliography}
\end{document}